\documentclass[11pt]{amsart}
\usepackage{amscd,amssymb,graphicx}
\usepackage{pinlabel}

\begin{document}

\newtheorem{thm}{Theorem}
\newtheorem*{thm*}{Theorem}
\newtheorem{cor}[thm]{Corollary}
\newtheorem{lemma}[thm]{Lemma}

\theoremstyle{definition}
\newtheorem{quest}[thm]{Question}
\newtheorem{ex}{Example}

\newcommand{\co}{\colon\,}
\newcommand{\bH}{\mathbb H}
\newcommand{\bN}{\mathbb N}
\newcommand{\bR}{\mathbb R}
\newcommand{\bZ}{\mathbb Z}
\newcommand{\cP}{\mathcal{P}}
\newcommand{\cR}{\mathcal{R}}
\newcommand{\cS}{\mathcal{S}}
\newcommand{\fsr}{finite subdivision rule }
\newcommand{\SP}{S_{\mathcal{P}}}
\newcommand{\SR}{S_{\mathcal{R}}}
\newcommand{\expm}{\varphi}
\newcommand{\subm}{\sigma_{\mathcal{R}}}

\newcommand{\pf}{\noindent {\bf Proof: }}

\newcommand{\nosubsections}{\renewcommand{\thethm}{\thesection.\arabic{thm}}
           \setcounter{thm}{0}}

\newcommand{\bdry}{\partial}

\title{Growth series for expansion complexes}

\author{J. W. Cannon}
\address{Department of Mathematics\\ Brigham Young University\\
Provo, UT 84602\\ U.S.A.}
\email{cannon@math.byu.edu}

\author{W. J. Floyd}
\address{Department of Mathematics\\ Virginia Tech\\
Blacksburg, VA 24061\\ U.S.A.}
\email{floyd@math.vt.edu}
\urladdr{http://www.math.vt.edu/people/floyd}

\author{W. R. Parry}
\address{Department of Mathematics\\ Eastern Michigan University\\
Ypsilanti, MI 48197\\ U.S.A.}
\email{walter.parry@emich.edu}

\keywords{growth series, expansion complex, finite subdivision rule}
\date\today
\begin{abstract}
This paper is concerned with growth series for expansion complexes for
finite subdivision rules. Suppose $X$ is an expansion complex for a finite 
subdivision rule $\mathcal{R}$ with bounded valence and mesh approaching 
$0$, and let $S$ be a seed for $X$. One can define a growth series for 
$(X,S)$ by giving the tiles in the seed norm $0$ and then using either 
the skinny path norm or the fat path norm to recursively define norms for 
the other tiles. The main theorem is that, with respect to either of these 
norms, the growth series for $(X,S)$ has polynomial growth. Furthermore, 
the degrees of the growth rates of hyperbolic expansion complexes are dense 
in the ray $[2,\infty)$.
\end{abstract}
\subjclass[2000]{Primary 52C20, 52C26; Secondary 05B45, 30F45}
\maketitle

Suppose $T$ is the set of tiles in a tiling of a plane and $S$ is a 
nonempty, finite subset of $T$ (often a single tile).
We give $T$ a metric by defining the distance $d(s,t)$ between two tiles
$s$ and $t$ to be the minimum nonnegative integer $n$, such that there is
a finite sequence $s_0, s_1, \dots, s_n$ of tiles such that $s = s_0$, 
$s_n = t$, and for $i\in \{1,\dots,n\}$ $s_{i-1} \cap s_{i}$ contains an 
edge. For each nonnegative integer $n$, let $a_n$ be the number of tiles 
whose distance from an element of $S$ is $n$. The growth series for $(T,S)$ 
is the power series $\sum_{n=0}^{\infty} a_n z^n$.

We are interested in the growth series that arise from tilings. Unless one
imposes additional structure on the tiling, the only requirement for
the growth series is that each $a_n \ge 0$. One can see this by starting with
the collection of circles in the plane with center the origin and radius a
positive integer. These circle decompose the plane into the union of a disk
and a countable family of annuli.  If $\sum_{n=0}^{\infty} a_n z^n$
is a power series with each $a_n> 0$, then one can 
subdivide the central
disk radially into $a_0$ tiles and for each $n >0$ one can 
radially subdivide the 
annuli bounded by circles of radii $n$ and $n+1$ into $a_n$ tiles. 
This produces a pair $(T,S)$ with growth series $\sum_{n=0}^{\infty} a_n z^n$.

By contrast, consider a tiling $T$ of the Euclidean or hyperbolic plane coming
from the images, under a cocompact group $G$ of isometries of the plane, of
a Dirichlet region $D$ for the action of $D$. Let the seed $S$ be the single 
tile $D$. Then the growth series for $(T,S)$ is the growth series for the
group $G$ with respect to the geometric generating set 
$\Sigma = \{g\in G\co g(D) \cap D\ \textrm{is an edge of}\ D\}$.
Cannon shows in \cite{Ccomb} that if $G$ is a cocompact discrete group of 
isometries of $\bH^n$, then with respect to a finite generating set for 
$G$ the growth series is rational. In \cite{Benson}, Benson proves the 
analogous result for groups of Euclidean isometries. 
In \cite{CW}, Cannon and Wagreich consider 
(1) the case that $D$ is a hyperbolic triangle whose angles are submultiples 
of $\pi$ and $G$ is the associated group. They explicitly compute the
rational growth function $f$ and prove that $f(1) = 1/\chi(G)$ 
and that all of the poles of $f$
lie in the unit circle except for a pair of positive reciprocal poles. 
They also consider (2) the case that $D$ is 
a hyperbolic polygon with $4g$ sides and $G$ is the fundamental group of a
closed orientable surface with genus $g\ge 2$, and prove that the rational
growth function $f$ has the same properties. There is extensive literature on
the special properties of the rational grouwth functions of hyperbolic
surface groups. See for example the papers of 
Bartholdi-Ceccherini-Silberstein \cite{BC}, Floyd \cite{F}, 
Floyd-Plotnick \cite{FPchi,FPsym,FPpq}, and Parry \cite{P}.

Inspired by the growth functions for surface groups, and motivated by
a question from Maria Ramirez Solano about the growth rate for the pentagonal
subdivision rule, we decided to consider growth series for expansion
complexes. Like the tilings coming from surface groups, expansion
complexes are essentially determined by a finite amount of combinatoiral data.
But the growth series are very different.
In Theorem~\ref{thm:growth} we prove that the associated growth functions
all have polynomial growth, so they rarely have rational growth.

\section{Finite subdivision rules and expansion complexes}\label{sec:fsr}

While a finite subdivison rule is defined dynamically, in essence it is a 
finite combinatorial procedure for recursively subdividing appropriate 
$2$-complexes. A  \fsr $\cR$ consists of (1) a finite 2-complex $\SR$, (2) 
a subdivision $\cR(\SR)$ of $\SR$, and (3) a continuous cellular map
$\subm\co \cR(\SR) \to \SR$ whose restriction to every open cell is a
homeomorphism. We further require that $\SR$ is the union of its closed
$2$-cells and each closed $2$-cell is the image of a 
polygon (called its tile type) with at least three edges 
by a continuous cellular map whose restriction to each open cell
is a homeomorphism. An $\cR$-complex is a 2-complex which is the union of its
closed 2-cells together with a structure map $f\co X \to \SR$; we require that
$f$ is a continuous cellular map whose restriction to each open cell is a
homeomorphism. The subdivision $\cR(\SR)$ of $\SR$ pulls back under $f$ to a subdivision $\cR(X)$ of $X$; $\cR(X)$ is an $\cR$-complex with structure map
$\subm \circ f \co \cR(X) \to \SR$. Since $\cR(X)$ is an $\cR$-complex, one
can subdivide it; this is how one can recursively subdivide complexes with
a \fsr.  See \cite{fsr} for the basic theory of finite subdivision
rules.

As a simple example, consider the pentagonal subdivision rule $\cP$
which was first described in \cite{fsr}. The subdivision complex $\SP$ has
a single vertex, a single edge, and a single face (which is the image of a 
pentagon). The subdivision of the tile type is shown in Figure~\ref{fig:pent}.

\begin{figure}[ht]
\centering
\includegraphics{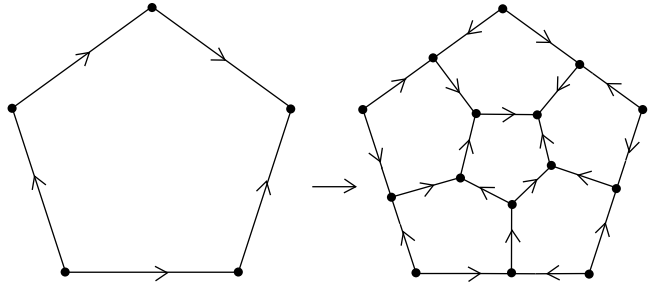}
\caption{The subdivision of the tile type for the 
pentagonal subdivision rule $\cP$}
\label{fig:pent}
\end{figure}

\begin{figure}[ht]
\centering
\includegraphics[width=1.5in]{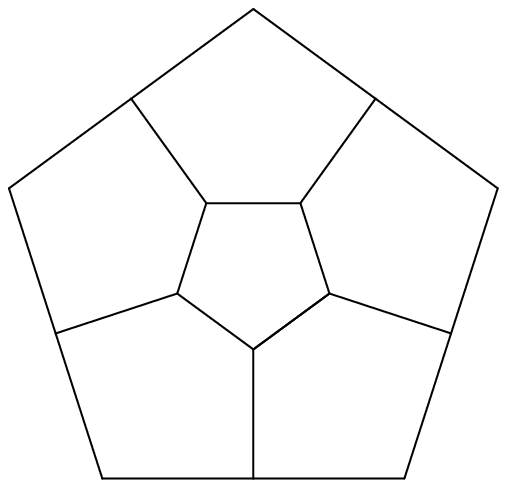}
\includegraphics[width=1.5in]{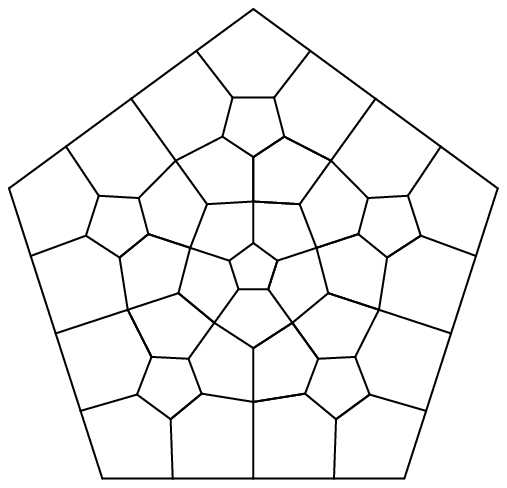}
\includegraphics[width=1.5in]{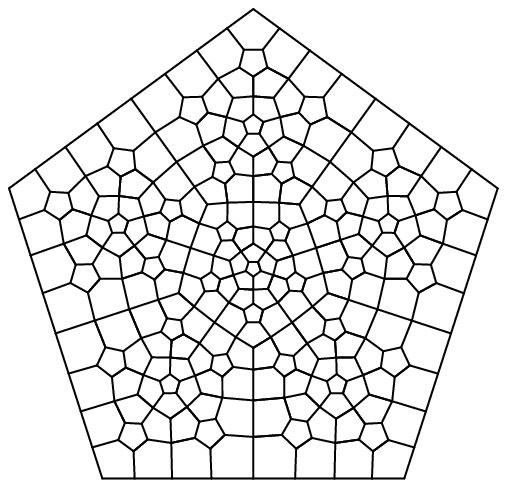}
\caption{$\cP(t)$, $\cP^2(t)$, and $\cP^3(t)$}
\label{fig:pent123}
\end{figure}

Bowers and Stephenson created the pentagonal expansion complex in \cite{BS} as
part of their analysis of the pentagonal subdivision rule.  
Figure~\ref{fig:pent123} shows the first three subdivisions of the tile 
type $t$, drawn with Stephenson's program
CirclePack \cite{CP}. We can identify $t$ with the central pentagon in its
first subdivision $\cP(t)$, and for each positive integer $n$ this induces
an inclusion of the $n$th subdivision $\cP^n(t)$ in
$\cP^{n+1}(t)$. The direct limit of the sequence of inclusions $\cP^n(t) \to
\cP^{n+1}(t)$ is the pentagonal expansion complex. Bowers and Stephenson
put a conformal structure on the pentagonal expansion complex by making each
open pentagon conformally a regular pentagon, using butterfiles to give charts
for open edges, and using power maps to give charts for vertices. They showed
that the expansion complex is conformally equivalent to the plane, and the 
expansion map (which takes each $\cP^n(t)$ to $\cP^{n+1}(t)$) is conformal.

In our papers \cite{expfsri, expfsrii} we gave the general definition of an
expansion complex for a finite subdivision rule and developed some of the
theory. An expansion complex for a \fsr $\cR$ is an $\cR$-complex $X$ which is
homeomorphic to $\bR^2$ such that there is an orientation-preserving 
homeomorphism $\expm \co X \to X$ such that $\subm \circ f = f \circ \expm$,
where $f$ is the structure map for $X$. If $X$ is an expansion complex and
$S$ is a subcomplex of $X$, then $S$ is a {\bf seed} of $X$ if $S$ is a
closed topological disk, $S \subset \expm(S)$, and $X = \cup_{n=0}^{\infty}
\expm^n (S)$. For the pentagonal expansion complex, one can take the tile type
$t$ to be a seed. It is possible for an expansion complex not to have a seed.
For example, if the subdivision map is the identity map then no expansion 
complex can have a seed. But if $\cR$ is a finite subdivision rule with bounded
valence and mesh approaching $0$, then it follows from 
\cite[Lemma 2.5]{expfsri} that every expansion complex for $\cR$ has a seed for
some iterate of $\cR$. As in \cite{BS} one can put a conformal structure on 
an expansion complex by taking each open tile to be conformally regular, 
using ``butterflies'' as charts in neighborhoods of open edges, and using
power maps to extend the conformal structure over the vertices. We call an
expansion complex \emph{parabolic} if with this conformal structure it is
conformally equivalent to the plane, and \emph{hyperbolic} if with this 
conformal structure it is conformally equivalent to the open unit disk. 

\section{Growth series for expansion complexes}

Let $\cR$ be a finite subdivision rule, let $X$ be an expansion complex for
$\cR$, and let $S$ be a seed for $X$. We define 
the \emph{skinny path norm} $|\cdot |$ on the tiles
of $X$ by setting norm $|t| = 0$ if $t$ is in $S$ and if $t$ is not in $S$ then
$|t|$ is the minimal positive integer $n$ such that there exist tiles
$t_0,\dots,t_n$ such that $t_0$ is in $S$, $t_n = t$, and 
$t_i \cap t_{i-1} \ne \emptyset$
for $1\le i \le n$.  For a nonnegative integer $n$, let
$s_n = \# \{\mbox{tiles } t\subset X\co |t| = n\}$ and let
$b_n = \# \{\mbox{tiles } t\subset X\co |t|\le n\}$ 
(so $s_n$ is the number of tiles
in the combinatorial sphere of radius $n$ and $b_n$ is the number of tiles in
the combinatorial ball of radius $n$).
The \emph{growth series} for $(X,S)$ is the 
power series $\sum_{n=0}^{\infty} b_n z^n$. 
The growth series has \emph{exponential growth} if
$\limsup_{n\to\infty} \sqrt[n] b_n > 1$ and has \emph{subexponential growth}
if $\limsup_{n\to\infty} \sqrt[n] b_n = 1$. The growth series has 
\emph{polynomial growth of degree $d$} if $d = \limsup_{n\to\infty}
\frac{\ln(b_n)}{\ln(n)}$. The growth series has \emph{intermediate
growth} if the growth is neither exponential nor polynomial.

\begin{thm}\label{thm:growth} 
Let $\cR$ be a finite subdivision rule with bounded valence and
mesh approaching $0$, let $X$ be a $\cR$-expansion complex, and let $S$ be a
seed for $X$. Then the growth series for $(X,S)$ with respect to the
skinny path norm has polynomial growth.
\end{thm}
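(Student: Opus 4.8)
The plan is to show that the combinatorial ball $B_n$ of radius $n$ around the seed $S$ is comparable in size to the $n$th iterate $\expm^n(S)$, and then to bound $\#\{\text{tiles in }\expm^n(S)\}$ polynomially in $n$ using the expansion map together with bounded valence. The key geometric input is that $\expm$ moves tiles outward in a controlled way: since $S \subset \expm(S)$ and $X = \bigcup_n \expm^n(S)$, each tile $t$ lies in some $\expm^n(S)$, and the least such $n$ is a second natural ``norm'' on tiles. First I would compare the skinny path norm $|t|$ with this expansion norm. One direction is easy: if $t \subset \expm^n(S)$ then, because $\expm$ is cellular and $\expm(S) \supset S$ is connected and contains $S$, one checks that the combinatorial distance from $t$ to $S$ inside $\expm^n(S)$ is at most $Cn$ for a constant $C$ depending only on the combinatorial diameter of $\cR(\SR)$; this uses that $\expm^{-1}$ carries the tiling of $\expm^n(S)$ to the $n$th subdivision of the tiling of $S$, and applying $\subm$ to $\cR(\SR)$ gives a bound on how far a tile can be from the boundary it was created near.

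The more substantial step is the reverse comparison: a tile at skinny-path distance $n$ from $S$ lies in $\expm^{m}(S)$ for some $m \le C'n$. Here I would argue that $\expm^{-1}$ is ``contracting'' on combinatorial distance in the sense that if $t \cap t' \ne \emptyset$ then $|\expm^{-1}(t)| $ and $|\expm^{-1}(t')|$ differ by a bounded amount and each is at most a bounded fraction of $\max(|t|,|t'|)$ plus a constant — because mesh approaching $0$ forces the seed, after enough subdivisions, to contain any bounded combinatorial neighborhood, so pulling back by $\expm$ strictly shrinks norms once they exceed a threshold. Iterating, a tile of skinny-path norm $n$ is pulled back into $S$ after $O(\log n)$ — in fact $O(n)$ suffices and is easier — applications of $\expm^{-1}$. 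Combining the two comparisons shows the skinny-path ball of radius $n$ is contained in $\expm^{O(n)}(S)$ and contains $\expm^{\Omega(n)}(S)$.

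It then remains to bound $N(m) := \#\{\text{tiles in }\expm^m(S)\}$ by a polynomial in $m$. This is where bounded valence enters decisively. Write $S$ as a union of tiles; since $\expm^{m}(S)$ is obtained combinatorially by $m$-fold subdivision of $S$ (via the structure map) and then ``expanding,'' the number of tiles in $\expm^m(S)$ equals the number of tiles in $\cR^m(S)$, the $m$th subdivision of $S$. Because $S$ is a fixed finite complex and $\cR$ has bounded valence, the boundary $\bdry S$ meets a bounded number of tiles; and the crucial point from \cite{expfsri} (using mesh $\to 0$ and bounded valence) is that an expansion complex that is a seed forces the subdivision to be ``non-expanding on average'' along the relevant part — more precisely, I expect one shows that the new tiles of $\cR^{m+1}(S)$ not already accounted for in $\cR^m(S)$ number at most polynomially in $m$, because the growth of tiles is governed by the linear recursion coming from the finitely many tile types, and the Perron–Frobenius eigenvalue of the subdivision matrix restricted to the relevant stratum equals $1$ (any eigenvalue $>1$ would contradict the plane being exhausted with the given conformal type, or more directly contradict mesh $\to 0$ together with $X \cong \bR^2$). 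Hence $N(m)$ grows polynomially, and so do $b_n$ and $s_n$.

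The main obstacle will be making precise the claim that the relevant subdivision growth rate is exactly $1$ rather than $>1$: equivalently, ruling out exponential growth of $b_n$. I expect the honest route is to invoke \cite[Lemma 2.5]{expfsri} (or its proof) to see that in an expansion complex with bounded valence and mesh $\to 0$, each tile has a uniformly bounded number of ``ancestors'' at each earlier level, which pins the growth of $N(m)$ — and hence of the combinatorial balls — to be at most polynomial in $m$; the degree then comes from the size of the largest Jordan block of the subdivision transition matrix at eigenvalue $1$. Everything else is bookkeeping with the two norm comparisons and the bounded-valence estimates on boundaries.
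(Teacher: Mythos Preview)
Your proposal has a genuine gap at its core: the claim that $N(m) = \#\{\text{tiles in }\expm^m(S)\}$ grows polynomially in $m$ is false, and with it the whole strategy collapses. In the paper's own family $\cR_{p,q}$ (Section~\ref{sec:examples}), a type-$t_2$ or type-$t_3$ tile subdivides into $pq$ subtiles, so $N(m)$ grows like $(pq)^m$---exponentially. The Perron--Frobenius eigenvalue of the subdivision matrix is $pq > 1$, not $1$; mesh approaching $0$ says only that combinatorial diameters of iterated subtiles shrink, not that subtile counts stay bounded, and it certainly does not force the spectral radius to be $1$. Your companion claim, that the combinatorial distance from $S$ to any tile of $\expm^n(S)$ is at most $Cn$, is equally wrong: in $\cR_{p,q}$ that distance is on the order of $p^n$.

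The paper's argument runs on precisely the exponential-versus-exponential comparison that you discard. Using mesh $\to 0$ together with \cite[Lemma~2.7]{expfsri} (which says some iterate $\expm^{n_1}$ at least doubles skinny-path distances once they are $\ge 2$), one shows that the skinny-path distance from $S$ to $\partial\expm^n(S)$ exceeds $ab^n$ for constants $a>0$, $b>1$; meanwhile $N(n)\le cd^n$ trivially, with $d$ the maximum number of subtiles of a tile type. Hence the combinatorial ball of radius $k$, where $ab^{n-1}<k\le ab^n$, is contained in $\expm^n(S)$ and has at most $cd^n$ tiles, so $\ln(b_k)/\ln(k)$ is bounded above by roughly $\ln(d)/\ln(b)$. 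You actually brushed past the correct relation when you wrote ``$O(\log n)$---in fact $O(n)$ suffices and is easier'': the $O(\log n)$ comparison between expansion level and skinny-path radius is the right one, and it is exactly what makes the argument work without any hypothesis on the spectral radius of the subdivision matrix.
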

\begin{proof}
We recall the skinny path distance from \cite{expfsri}. If $x,y\in X$, the 
\emph{skinny path distance} $d(x,y)$ is the minimum integer $n$ such that
there is a finite sequence $t_0,\dots,t_n$ of tiles
such that $x\in t_0$, $y\in t_n$,
and $t_{i-1} \cap t_n \ne \emptyset$ for $i\in \{1,\dots,n\}$. The skinny path
distance does not define a distance function on $X$ since two points in the 
same tile will have skinny path distance $0$, but it does define a
pseudometric.

Since $\cR$ has mesh approaching $0$, there is a positive 
integer $n_0$ such that the skinny path distance in $X$ from $S$ to $\partial
\expm^{n_0}(S)$ is at least $2$. By \cite[Lemma 2.7]{expfsri}, there is a postive
integer $n_1$ such that if $x,y \in X$ and $d(x,y) \ge 2$ then 
$d(\expm^{n_1}(x), \expm^{n_1}(y)) \ge 2d(x,y)$. 
It easily follows that there are a positive real
number $a$ and a real number $b > 1$ such that for every positive integer
$n$ the skinny path distance from $S$ to $\partial \expm^n(S)$  is greater than
$a b^n$. Since $S$ is compact and there is an upper bound, $d$, on the number
of subtiles in the first subdivision of a tile type of $\cR$, for any positive
integer $n$ the number of tiles in $\expm^n(S)$ is at most $c d^n$, where 
$c = \# S$.  Suppose $k\ge 2$ is
an integer such that $\ln(k) > \ln(a)$. Then there is a unique
positive integer $n$ such that $n -1 <  \frac{\ln(k)-\ln(a)}{\ln(b)} \le
n$. Then $ab^{n-1} < k \le ab^{n}$ and so
$$\frac{\ln(b_k)}{\ln(k)} < \frac{\ln(cd^n)}{\ln(ab^{n-1})}
= \frac{\ln(c) + n\ln(d)}{\ln(a/b) + n\ln(b)}$$ 
and so
$\limsup_{k\to\infty} \frac{\ln(b_k)}{\ln(k)} 
\le \frac{\ln(d)}{\ln(b)}$ and
the growth series has polynomial growth.
\end{proof}

One can also consider a growth series for $(X,S)$ with repect to the fat path
norm. As above, let 
$\cR$ be a finite subdivision rule, let $X$ be an expansion complex for
$\cR$, and let $S$ be a seed for $X$. We define
the \emph{fat path norm} $|\cdot |$ on the tiles
of $X$ by setting the norm $|t| = 0$ if $t$ is in $S$ and if 
$t$ is not in $S$ then
$|t|$ is the minimal positive integer $n$ such that there exist tiles
$t_0,\dots,t_n$ such that $t_0$ is in $S$, $t_n = t$, and
$t_i \cap t_{i-1}$ contains an edge for
$1\le i \le n$. The other definitions in the first paragraph of this
section follow exactly as before. Since for every nonnegative integer $n$
the number of tiles of fat path norm at most $n$ is at most the number of 
tiles of skinny path norm at most $n$, one gets the immediate corollary.

\begin{cor}\label{cor:growth}
Let $\cR$ be a finite subdivision rule with bounded valence and
mesh approaching $0$, let $X$ be a $\cR$-expansion complex, and let $S$ be a
seed for $X$. Then the growth series for $(X,S)$ with respect to the
fat path norm has polynomial growth.
\end{cor}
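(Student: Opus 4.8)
The plan is to reduce the statement directly to Theorem~\ref{thm:growth} by comparing the two norms tile by tile. First I would observe that any sequence $t_0,\dots,t_n$ of tiles with $t_0$ in $S$, $t_n = t$, and $t_i\cap t_{i-1}$ containing an edge for $1\le i\le n$ is in particular a sequence with $t_i\cap t_{i-1}\ne\emptyset$ for each $i$; hence every path witnessing a bound on the fat path norm of $t$ also witnesses the same bound on the skinny path norm, and so for every tile $t$ the skinny path norm of $t$ is at most its fat path norm. Writing $b_n^{\mathrm{skinny}}$ and $b_n^{\mathrm{fat}}$ for the two counting functions, it follows that the combinatorial ball of radius $n$ in the fat path norm is contained in the combinatorial ball of radius $n$ in the skinny path norm, whence $b_n^{\mathrm{fat}}\le b_n^{\mathrm{skinny}}$ for every nonnegative integer $n$. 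In particular each $b_n^{\mathrm{fat}}$ is finite, so the fat path growth series is well defined.

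Next I would pass to growth rates. Taking logarithms and dividing by $\ln n$, the inequality $b_n^{\mathrm{fat}}\le b_n^{\mathrm{skinny}}$ gives $\frac{\ln b_n^{\mathrm{fat}}}{\ln n}\le\frac{\ln b_n^{\mathrm{skinny}}}{\ln n}$ for all $n\ge 2$ (note $b_n^{\mathrm{fat}}\ge\#S\ge 1$, since the counting function is nondecreasing), and hence $\limsup_{n\to\infty}\frac{\ln b_n^{\mathrm{fat}}}{\ln n}\le\limsup_{n\to\infty}\frac{\ln b_n^{\mathrm{skinny}}}{\ln n}$. By Theorem~\ref{thm:growth} the right-hand side equals a finite number $d$, so the left-hand side is finite as well, which is precisely the assertion that the fat path growth series has polynomial growth (of degree at most $d$).

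There is essentially no obstacle here; the one point worth a word of care is the degenerate possibility that $X$ fails to be fat-path connected from $S$, so that only finitely many tiles have finite fat path norm. In that case $b_n^{\mathrm{fat}}$ is eventually constant, $\frac{\ln b_n^{\mathrm{fat}}}{\ln n}\to 0$, and the growth is polynomial of degree $0$, so the conclusion holds in every case. I would emphasize that the substantive content is entirely carried by Theorem~\ref{thm:growth}: the tile-by-tile comparison above is exactly the reason this is recorded as an immediate corollary rather than proved afresh.
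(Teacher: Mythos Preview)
Your argument is correct and is exactly the paper's own approach: the paper simply observes that the fat path ball of radius $n$ is contained in the skinny path ball of radius $n$, so $b_n^{\mathrm{fat}}\le b_n^{\mathrm{skinny}}$, and invokes Theorem~\ref{thm:growth}. Your extra remark on the degenerate disconnected case is harmless but unnecessary here, since under the hypotheses every tile is reachable from $S$ by a fat path.
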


\section{A family of examples}\label{sec:examples}

In all of the examples we consider in this section, the skinny path norms and
the fat path norms are the same, so we won't name the norm.

We start with a simple example of an expansion complex for a finite 
subdivision rule $\cR_1$.  The subdivisions of the two tile types are shown
in Figure~\ref{fig:hyp-1-2sub}. The subdivision $\cR_1(t_1)$ of the tile type 
$t_1$ contains a tile in its interior which is labeled $t_1$, so the tile 
type $t_1$ is the seed of an expansion complex $X$.  Let $\expm\co X \to X$
be the expansion map. For convenience we denote the seed by $S$.
Figure~\ref{fig:hyp-1-2exp} shows part of the expansion complex, with the seed
in the center. Then $s_0 = 1$, $s_n = 2^{n+1}$ if $n >0$, and $b_n = 2^{n+2}-3$
for all $n\ge 0$. The growth series has exponential growth, but this doesn't
violate Theorem~\ref{thm:growth} since $\cR_1$ 
doesn't have mesh approaching $0$.

For each positive integer $n$, let $R_n = \expm^n(S) \setminus\textrm{int}
(S)$.  By \cite[Theorem 5.5]{expfsrii} $X$ is hyperbolic if
$\lim_{n\to\infty} M(R_n,\cS(X)) \ne \infty$, where $\cS(X)$ is the shingling
of $X$ by tiles.

Let $n$ be a positive integer. Define a weight function $w$ on $R_n$ as 
follows.  If $t$ is a tile of $R_n$, then for some $k\in\{1,\dots,n\}$,
$t\in \expm^k(S) \setminus \textrm{int}(\expm^{k-1}(S))$; 
we give $t$ weight $2^{n-k}$. 
The height curves for $R_n$ have height 
$H(R_n,w) = \sum_{i=0}^{n-1} 2^i = 2^n -1$, and
$w$ is the sum of the weights associated to the height curves. Hence 
by \cite[2.3.6]{SR} $w$ is the
optimal weight function for $R_n$ for fat flow modulus. For each $i\in
\{0,\dots,n-1\}$, there are $2^{n+1-i}$ tiles in $R_n$ with $w$-weight
$2^i$. Hence $A(R_n,w) = \sum_{i=0}^{n-1} 2^{n+1-i} \cdot (2^i)^2 =
2^{n+1}(2^n -1)$
and $$M(R_n,\cS(X)) = M(R_n,w) =  \frac{H(R_n,w)^2}{A(R_n,w)} = 
\frac{(2^n-1)}{2^{n+1}}.$$
Hence $\lim_{n\to\infty} M(R_n,\cS(X)) = \frac{1}{2} < \infty$ 
and $X$ is hyperbolic.

\begin{figure}[!ht]
\labellist
\small\hair 2pt
\pinlabel $t_1$ at 45 142
\pinlabel $t_1$ at 174 142
\pinlabel \rotatebox{-90}{$t_2$} at 144 142
\pinlabel \rotatebox{90}{$t_2$}  at 203 142
\pinlabel $t_2$ at 174 112
\pinlabel \rotatebox{180}{$t_2$} at 174 171
\pinlabel $t_2$ at 45 42
\pinlabel $t_2$ at 155 42
\pinlabel $t_2$ at 193 42
\pinlabel $\longrightarrow$ at 105 142
\pinlabel $\longrightarrow$ at 105 42
\endlabellist
\centering
\includegraphics{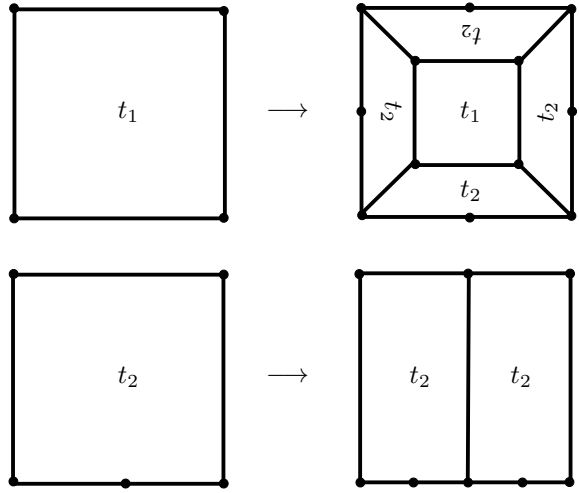} 
\caption{The subdivisions of the tile types for $\cR_1$}
\label{fig:hyp-1-2sub}
\end{figure}

\begin{figure}[!ht]
\centering
\includegraphics[width=4in]{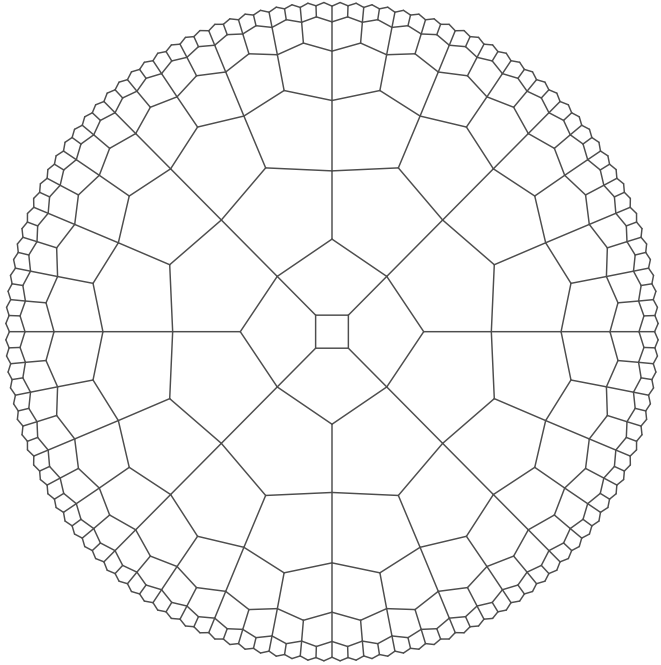}
\caption{Part of the expansion complex for $\cR_1$}
\label{fig:hyp-1-2exp}
\end{figure}

The finite subdivision rule $\cR_2$ is similar to $\cR_1$ but it has been
modified to have mesh approaching $0$. This time there are three tile types, 
and the subdivisions are shown in Figure~\ref{fig:hyp-2-3sub}. Tile type
$t_1$ is a seed for an expansion complex $X$; part of this expansion complex is
shown in Figure~\ref{fig:hyp-2-3exp}. One can show as we did for the previous
example that $X$ is hyperbolic. The hyperbolicity of $X$ also follows from
the proof of \cite[Lemma 5.1]{expfsrii}; this example is simpler than the 
example being analyzed there but the approach there fits this example as well.
Let $S$ be the seed of $X$ consisting of a single tile labeled $t_1$, 
and for each positive integer $n$ let
$R_n = \expm^n(S) \setminus\textrm{int}(S)$. Given $n$, define a weight
function $w$ on $R_n$ by giving a tile $t$ in $R_n$ weight $3^{n-k}$ if
$t\subset \expm^k(S) \setminus \textrm{int}\expm^{k-1}(S)$. The height
$H(R_n,w) = \sum_{k=0}^{n-1} 2^{n-1-k}3^k = 3^n - 2^n$. The weight function
$w$ is a sum of weight functions corresponding to height curves, so it
is an optimal weight function. The area $A(R_n,w) = 
4 \sum_{k=0}^{n-1} 3^{2k} 6^{n-1-k} = 4\cdot 3^{n-1} \cdot (3^n - 2^n)$,
so $M(R_n,\cS(X)) = M(R_n,w) = \frac{H(R_n,w)^2}{A(R_n,w)} = 
\frac{3^n - 2^n}{4\cdot 3^{n-1}}$,
$\lim_{n\to\infty} M(R_n,\cS(X)) = \frac{3}{4} < \infty$,
and $X$ is hyperbolic.

\begin{figure}[!ht]
\labellist
\small\hair 2pt
\pinlabel $t_1$ at 45 242
\pinlabel $t_1$ at 174 242
\pinlabel \rotatebox{-90}{$t_3$} at 144 242
\pinlabel \rotatebox{90}{$t_3$}  at 203 242
\pinlabel $t_3$ at 174 212
\pinlabel \rotatebox{180}{$t_3$} at 174 271
\pinlabel $t_2$ at 45 142
\pinlabel $t_2$ at 144 158
\pinlabel $t_2$ at 172 158
\pinlabel $t_2$ at 200 158
\pinlabel $t_2$ at 144 117
\pinlabel $t_2$ at 172 117
\pinlabel $t_2$ at 200 117
\pinlabel $t_3$ at 45 42
\pinlabel $t_2$ at 144 58
\pinlabel $t_2$ at 172 58
\pinlabel $t_2$ at 200 58
\pinlabel $t_3$ at 144 17
\pinlabel $t_3$ at 172 17
\pinlabel $t_3$ at 200 17
\pinlabel $\longrightarrow$ at 105 242
\pinlabel $\longrightarrow$ at 105 142
\pinlabel $\longrightarrow$ at 105 42
\endlabellist
\centering
\includegraphics{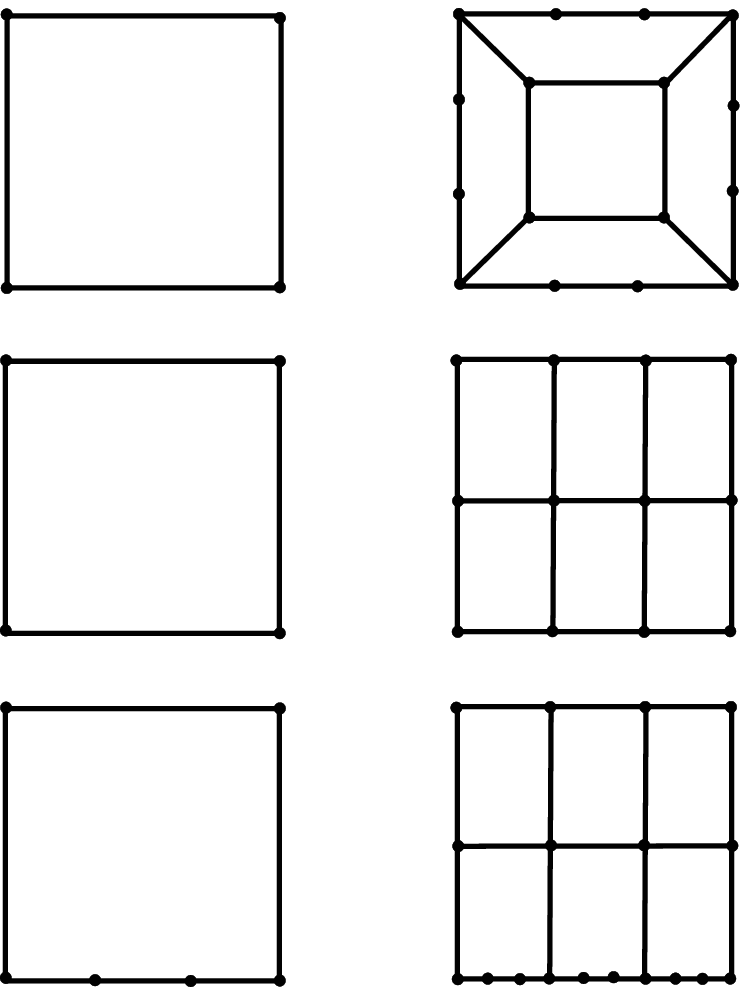}
\caption{The subdivisions of the tile types for $\cR_2$}
\label{fig:hyp-2-3sub}
\end{figure}

\begin{figure}[!ht]
\centering
\includegraphics[width=4in]{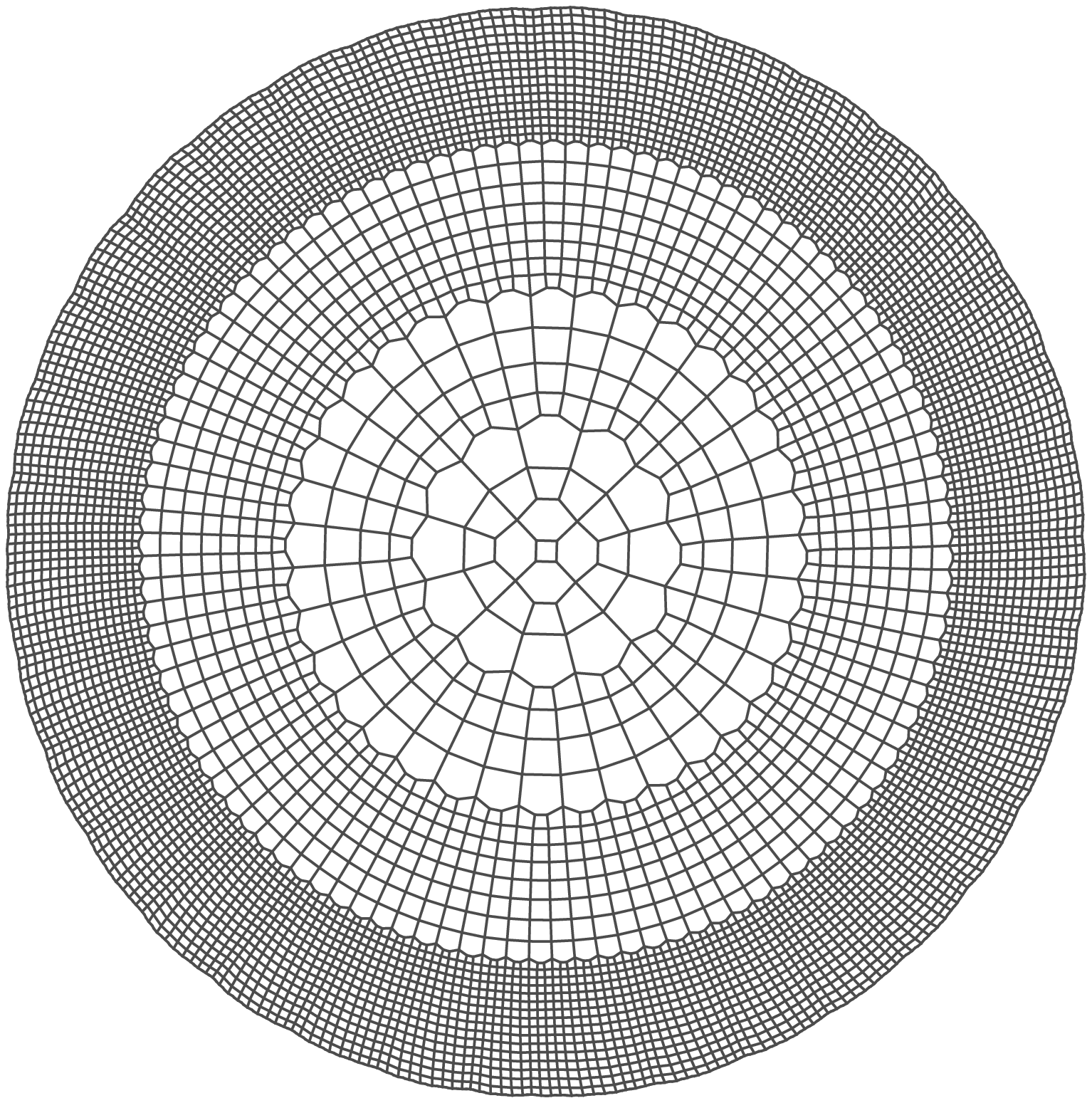}
\caption{Part of the expansion complex for $\cR_2$}
\label{fig:hyp-2-3exp}
\end{figure}

The finite subdivision rule $\cR_2$ is a special case ($\cR_{2,3}$)
of a two-parameter
family of finite subdivision rules $\cR_{p,q}$ for integers $p,q\ge 2$. For
a given $p$ and $q$, $\cR_{p,q}$ has three tile types, $t_1$ (a quadrilateral),
$t_2$ (a quadrilateral), and $t_3$ (a ($q$+3)-gon) which is viewed as a 
quadrilateral with the bottom edge subdivided into $q$ subedges. The tile type
$t_1$ is subdivided into $5$ subtiles, a central tile of type $t_1$ 
surrounded by four tiles of type $t_3$. The quadrilateral $t_2$ is 
subdivided into $pq$-subtiles, all of type $t_2$, arranged in $p$ rows 
and $q$ columns.  The tile type $t_3$ is also subdivided 
into $pq$ subtiles arranged in $p$ rows and $q$ columns, with
each column in the first $p-1$ rows containing a tile of type $t_2$ and each
column in the last row containing a tile of type $t_3$. 
As for the previous two examples, there is an expansion complex $X_{p,q}$ for 
$\cR_{p,q}$ whose seed $S$ is a single tile of type $t_1$. As before,
we denote the expansion map by $\expm$. For each
positive integer $n$ we let $R_n = \expm^n(S)\setminus\textrm{int}(S)$, and
we put a weight function on $R_n$ as follows: if $t$ is a tile of $R_n$ and
$t\subset \expm^k(S) \setminus \textrm{int}(\expm^{k-1}(S))$, then
the weight of $t$ is $q^{n-k}$. It follows as for $\cR_1$ and $\cR_2$ that $w$
is an optimal weight function for $M(R_n,\cS(X_{p,q}))$. If $p\ne q$, 
the height of $R_n$ with respect to $w$ is $H(R_n,w) = 
\sum_{k=0}^{n-1} q^k p^{n-1-k} = \frac{q^n-p^n}{q-p}$, the area is
$A(R_n,w) = 4\sum_{k=0}^{n-1} (pq)^{n-1-k}q^{2k} = 
\frac{4q^{n-1}(q^n-p^n)}{q-p}$, 
and the fat flow modulus of $R_n$ is 
$$\frac{H(R_n,w)^2}{A(R_n,w)} =
\frac{(q^n-p^n)^2}{(q-p)^2}\frac{q-p}{4q^{n-1}(q^n-p^n)}
= \frac{q^n-p^n}{4 q^{n-1}(q-p)} = \frac{1-(p/q)^n}{4(1-p/q)}.$$
If $p=q$, then $H(R_n,w) = n\cdot p^{n-1}$, $A(R_n,w) =  4n\cdot p^{2n-2}$,
and $M(R_n,w) = \frac{n}{4}$. If $p< q$, then $\limsup_{n\to\infty}
M(R_n,\cS(X_{p,q})) = \frac{q}{4(q-p)} < \infty$ and $X_{p,q}$ is 
hyperbolic.

We next look in more detail at the growth series for $X_{p,q}$. Suppose
$p,q\ge 2$, and consider $X_{p,q}$ as an expansion complex for $\cR_{p,q}$
with seed $S$ a single tile of type $t_1$. For looking at the finer detail of
the growth series, it is more convenient to look at the growth series
$g(z) = \sum_{n=0}^{\infty} s_n z^n$ for spheres instead of the growth series
$f(z) = \sum_{n=0}^{\infty} b_n z^n$ for balls. As we saw above,
$$g(z) = 1 + 4z + 4qz^2 + 4qz^3 + \dots + 4qz^{p+1} + 4q^2z^{p+2} 
+ \dots 4q^2 z^{p^2+p+1}
+ 4q^3 z^{p^2 + p + 2} + \dots,$$
where each coefficient $4q^k$ appears $p^k$ consecutive times.
For example, when $p=2$ and $q=3$ (the example $\cR_2$), 
$$g(z) = 1 + 4z + 12z^2 + 12 z^3 + 36 z^4 + \dots + 36z^7 +
108z^8 + \dots + 108 z^{15} + 324 z^{16} + \dots.$$
Since the sequence $\{s_n\}$ has no upper bound on the number of consecutive
terms which are constant, $g(z)$ cannot be rational or even $D$-finite.
However, $g(z)$ does satisfy a functional equation.
Note that
$$q\cdot g(z^p)= q  + 4qz^p + 4q^2(z^{2p} + z^{3p} + \dots + z^{p^2+p})  +
4q^3 z^{p^2 + 2p} + \dots,$$ so
$$q\cdot \left[ g(z^p)-1 \right] (1 + z + \dots + z^{p-1}) =z^{p-2}(g(z) - 1 - 4z)$$
and $g$ satisfies the functional equation
$$q\cdot (g(z^p)-1) \frac{(z^p-1)}{(z-1)} = z^{p-2}(g(z) - 1 - 4z).$$ 

We now consider the growth series $\sum_{n=0}^{\infty} b_n z^n$ for $X_{p,q}$
with respect to the seed $S$ consisting of a single tile labeled $t_1$.
For convenience we assume that $p,q\ge 2$.
Let $n$ be a nonnegative integer. Then there is a nonnegative integer $k$
such that $\frac{p^k-1}{p-1} \le n < \frac{p^{k+1}-1}{p-1}$. Let $m =
n - \frac{p^k-1}{p-1}$. Then $0 \le m \le p^k -1$.
Then $$n = \frac{p^k-1}{p-1} + m\quad \textrm{and} \quad
b_n = 1 + 4\frac{(pq)^k-1}{pq-1} + m(4q^k).$$
When $m = 0$, 
$$\frac{\ln(b_n)}{\ln(n)} = \frac{\ln(pq + 4(pq)^k -5) - \ln(pq-1)}{\ln(p^k-1) 
- \ln(p-1)}$$
and in general
$$\frac{\ln(b_n)}{\ln(n)} < \frac{\ln(pq + 4(pq)^{k+1} -5) - 
\ln(pq-1)}{\ln(p^{k+1}-1) - \ln(p-1)}.$$
It follows that the growth series has polynomial growth of
degree 
$$\limsup_{n\to\infty} \frac{\ln(b_n)}{\ln(n)} = \frac{\ln(pq)}{\ln(p)}
= 1 + \frac{\ln(q)}{\ln(p)}.$$
Since $X_{p,q}$ is hyperbolic whenever $q > p$, the degrees of the polynomial
growth rates of hyperbolic expansion complexes with respect to the fat path 
norm are dense in $[2,\infty)$, and the degrees of the polynomial
growth rates of hyperbolic expansion complexes with respect to the skinny path
norm are dense in $[2,\infty)$.

In his Ph.D. thesis \cite{Wood}, Wood notes that hyperbolic complexes can have 
spherical growth rates of degree $1+\epsilon$ for $\epsilon$ arbitrarily small.

\end{document}